\theoremstyle{theorem}
\newtheorem{theorem}{Theorem}
\newtheorem{proposition}[theorem]{Proposition}
\newtheorem{lemma}[theorem]{Lemma}
\newtheorem{corollary}[theorem]{Corollary}
\theoremstyle{definition}
\newtheorem{definition}[theorem]{Definition}
\newtheorem{example}[theorem]{Example}
\renewcommand{\epsilon}{\varepsilon}
\renewcommand{\phi}{\varphi}
\renewcommand{\leq}{\leqslant}
\renewcommand{\geq}{\geqslant}
\newcommand{\norm}[1]{\left\lVert#1\right\rVert}
\newcommand{\pair}[1]{\left\langle{#1}\right\rangle}
\newcommand{\abs}[1]{\left\vert#1\right\vert}
\newcommand{\ball}{\operatorname{ball}}
\newcommand{\R}{\mathbb{R}}
\newcommand{\C}{\mathbb{C}}
\newcommand{\N}{\mathbb{N}}
\newcommand{\qtext}[1]{\quad\text{#1}\quad}
\newcommand{\qand}{\qtext{and}}
\newcommand{\qfa}{\qtext{for all}}
\begin{document}

\title{Exploring Convexity in Normed Spaces}
\markright{Convexity}
\author{Ryan L.~Acosta Babb}

\maketitle

\begin{abstract}
    Normed spaces appear to have very little going for them: aside from the hackneyed linear structure,
    you get a norm whose only virtue, aside from separating points, is the Triangle Inequality.
    What could you possibly prove with that? As it turns out, quite a lot.
    In this article we will start by considering basic convexity properties
    of normed spaces, and gradually build up to some of the highlights
    of Functional Analysis, emphasizing how these notions of convexity
    play a key role in proving many surprising and deep results.
\end{abstract}

\noindent
\begin{quotation}\begin{flushright}
\emph{¿D\'onde vas con la desigualdad triangular?}\\
(``Where do you think you're going with just the triangle inequality?'')\\
-- Rafael Pay\'a Albert
\end{flushright}\end{quotation}

\section{The triangle inequality.}
The basic (one might add ``only'') tool in a normed vector space $(X,\norm{\cdot})$ is the \emph{Triangle Inequality}
\begin{equation}\label{eqn:TriangIneq}
    \norm{x+y} \leq \norm{x} + \norm{y}.
\end{equation}

Its meaning should be evident to anyone who has tried crossing a rectangular field ``the long way''
as opposed to cutting diagonally across:
the longest side of a triangle is never greater than the sum of the lengths of the remaining sides.

This may look innocent enough, but as we shall see throughout this article,
the triangle inequality (and some of its cousins) play a crucial role in many types of arguments in functional analysis.
After exploring the geometric consequences of the triangle inequality in some detail,
we shall show how it features in proofs of the uniqueness of Hahn--Banach extensions,
that it can upgrade weak convergence to norm convergence, and even show that a
a mild strengthening of the triangle inequality implies reflexivity---%
a profound notion with far-reaching consequences in the study of Banach Spaces.
But first,
we wish to bring the reader's attention to a different geometric interpretation of the triangle inequality:
\emph{convexity}.

Recall that a set is \emph{convex} if it contains all line segments between any two of its points.
Formally, we say that $C\subset X$ is convex if for any $x,y\in C$ and $t\in[0,1]$, $tx + (1-t)y\in C$.
Inequality \eqref{eqn:TriangIneq} is then \emph{equivalent} to the convexity of the closed unit ball in $X$, that is \[
    \ball{X} := \{x \in X : \norm{x}\leq 1\}.
\]

\begin{proposition}
    A function $p\colon X\to[0,\infty)$ with the property that $p(\lambda x)=\abs{\lambda}p(x)$
    for all $x\in X$ and $\lambda\in\R$, satisfies \eqref{eqn:TriangIneq}
    if, and only if, the set $\{x\in X: p(x)\leq 1\}$ is convex.
\end{proposition}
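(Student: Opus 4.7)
The plan is to prove both implications directly, using the positive homogeneity of $p$ and the definition of convexity. Nothing deep is needed; the content is simply the correspondence between a gauge functional and its unit ball.

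For the forward direction, I would assume $p$ satisfies \eqref{eqn:TriangIneq} and show that $C := \{x\in X: p(x)\leq 1\}$ is convex. Given $x,y\in C$ and $t\in[0,1]$, I would compute
\[
    p(tx+(1-t)y) \leq p(tx) + p((1-t)y) = t\,p(x) + (1-t)\,p(y) \leq t + (1-t) = 1,
\]
using the triangle inequality, then homogeneity (with $\lambda = t$ and $\lambda = 1-t$, both nonnegative so that $\abs{\lambda} = \lambda$), and finally the assumption that $x,y\in C$.

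For the converse, I would assume $C$ is convex and derive \eqref{eqn:TriangIneq}. The natural obstacle here is that $x$ or $y$ may not lie in $C$, and one cannot form $x/p(x)$ if $p(x)=0$; the Minkowski functional trick handles both difficulties at once. Fix $\epsilon>0$ and set $a := p(x) + \epsilon$, $b := p(y) + \epsilon$, both strictly positive. By homogeneity, $p(x/a) = p(x)/a < 1$ and similarly $p(y/b) < 1$, so $x/a, y/b \in C$. Now take the convex combination with weight $t = a/(a+b)$:
\[
    \frac{a}{a+b}\cdot\frac{x}{a} + \frac{b}{a+b}\cdot\frac{y}{b} = \frac{x+y}{a+b} \in C,
\]
so that $p(x+y) \leq a+b = p(x)+p(y)+2\epsilon$. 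Letting $\epsilon\to 0$ delivers \eqref{eqn:TriangIneq}.

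The only step that requires care is the choice of the perturbed denominators $a,b$: choosing $a = p(x)$ directly fails when $p(x)=0$, and choosing them equal loses the correct weighting in the convex combination. The main conceptual point I want to highlight is that the $\epsilon$-shift is what makes the argument work uniformly, and that this is precisely the construction underlying the Minkowski functional of a convex absorbing set.
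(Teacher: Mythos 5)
Your proof is correct and follows essentially the same route as the paper's: the forward direction is the same direct computation, and the converse uses the same convex combination with weights proportional to the (shifted) values of $p$. The one difference is your $\epsilon$-perturbation of the denominators, which is a genuine refinement: the paper's argument only treats the case $p(x),p(y)\neq 0$ and silently omits the degenerate case $p(x)=0$ or $p(y)=0$, which your version covers uniformly.
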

\begin{proof}
    Let $B_p:=\{x\in X: p(x)\leq 1\}$.
    It is straightforward to check that $B_p$ is convex if $p$ satisfies the triangle inequality.
    Conversely, assume that $B_p$ is convex and consider $x,y\in X$ with $p(x),p(y)\neq 0$.
    If we then let \[
        t := \frac{p(x)}{p(x)+p(y)} \qtext{so} 1-t = \frac{p(y)}{p(x)+p(y)},
    \] then $t\in (0,1)$.
    Since $x/p(x)$ and $y/p(y)$ lie in $B_p$, it follows by convexity that
    $tx/p(x)+(1-t)y/p(y) \in B_p$.
    Unraveling the definitions, this is equivalent to \[
        p\left(\frac{x}{p(x)+p(y)}+\frac{y}{p(x)+p(y)}\right) \leq 1,
    \] and the triangle inequality follows.
\end{proof}

If by ``ball'' we mean the usual (i.e., Euclidean) disk or ball in $\R^2$ and $\R^3$, respectively, then this should be fairly clear.
But what happens at the boundary? Take two points $x$ and $y$ with $\norm{x}_2=\norm{y}_2=1$.
Then it is clear that no point on the line joining $x$ and $y$ will meet the boundary again.
In particular, their midpoint $\tfrac{x+y}{2}$ does not lie on the boundary, i.e., \[
    \norm{\frac{x+y}{2}}_2 < 1.
\] In other words, the triangle inequality is strict:
\begin{equation}\label{eqn:R2strict}
    \norm{x+y}_2 < \norm{x}_2 + \norm{y}_2 \qtext{whenever} x\neq y \text{ on } \partial(\ball{\R^2}).
\end{equation}
Such spaces are called \emph{strictly convex}.

\begin{figure}
    \centering
    \includegraphics{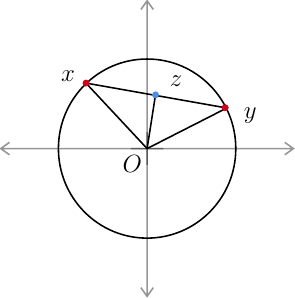}
    \caption{Strict convexity of the Euclidean ball in $\R^2$: the midpoint $z=\tfrac{x+y}{2}$
    never lies on the circle.}
    \label{fig:R2ball}
\end{figure}

\begin{definition}
    A normed space $(X,\norm{\cdot})$ is \emph{strictly convex} if
    whenever $x,y$ lie on the boundary of $\ball{X}$ with $\norm{\tfrac{x+y}{2}}=1$, we have $x=y$.
\end{definition}

Figure \ref{fig:R2ball} suggests a slightly stronger statement.
Let $z = \tfrac{x+y}{2}$. If we separate $x$ and $y$ enough along the circle,
we can make $\norm{z}_2$ (i.e., the segment $Oz$) as small as we like.
In fact, more is true: given any separation $\norm{x-y}_2$, we can ensure a uniform upper bound on $\norm{z}_2$.
This follows from the \emph{Law of Cosines}.

\begin{proposition}\label{prop:R2uniformconvex}
   For any  $\epsilon\in (0,2]$ there is a $\delta\in(0,1)$ such that,
   for all $x,y$ on the unit circle with $\norm{x-y}_2>\epsilon$, we have $\norm{\tfrac{x+y}{2}}_2<\delta$.
\end{proposition}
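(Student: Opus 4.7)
The plan is to reduce the proposition to the \emph{parallelogram identity} for the Euclidean norm,
\[
    \norm{x+y}_2^2 + \norm{x-y}_2^2 = 2\norm{x}_2^2 + 2\norm{y}_2^2,
\]
valid for all $x,y\in\R^2$. Once this identity is in hand, taking $\norm{x}_2=\norm{y}_2=1$ turns the right-hand side into $4$, so $\norm{x+y}_2^2 = 4 - \norm{x-y}_2^2$, and the hypothesis $\norm{x-y}_2>\epsilon$ immediately yields $\norm{\tfrac{x+y}{2}}_2 < \sqrt{1-\epsilon^2/4}$.

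For the identity itself, I would follow the hint and appeal to the Law of Cosines. Consider the triangle with sides $x$ and $y$ meeting at the origin at some angle $\theta\in[0,\pi]$ with third side $x-y$, and the head-to-tail triangle with third side $x+y$, whose corresponding apex angle is $\pi-\theta$. The Law of Cosines gives $\norm{x-y}_2^2 = \norm{x}_2^2+\norm{y}_2^2 - 2\norm{x}_2\norm{y}_2\cos\theta$ and $\norm{x+y}_2^2 = \norm{x}_2^2+\norm{y}_2^2 + 2\norm{x}_2\norm{y}_2\cos\theta$; adding the two kills the cosine term and delivers the identity.

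It then remains to set $\delta := \sqrt{1-\epsilon^2/4}$. For $\epsilon\in(0,2)$ this clearly lies in $(0,1)$, as required. The endpoint $\epsilon=2$ is a degenerate case: no unit vectors $x,y$ satisfy $\norm{x-y}_2>2$, so the conclusion is vacuous and we may simply take $\delta=1/2$. The main obstacle, if one can call it that, is the conceptual observation that the parallelogram identity is the right tool---once seen, the rest is a single line of algebra and no limiting or continuity argument is required.
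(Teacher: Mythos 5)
Your proof is correct and lands on exactly the same key identity as the paper's---namely $\norm{\tfrac{x+y}{2}}_2^2 = 1-\tfrac{1}{4}\norm{x-y}_2^2$ for unit vectors---with essentially the same choice of $\delta$ (the paper takes any $\delta>\sqrt{1-\tfrac14\epsilon^2}$, you take equality; both work since the hypothesis $\norm{x-y}_2>\epsilon$ is strict). The route differs only in the decomposition: the paper applies the Law of Cosines to the triangles $\triangle Oxy$ and $\triangle Oxz$ and equates the two expressions for $\cos(\angle x)$, whereas you apply it at the origin to the supplementary angles $\theta$ and $\pi-\theta$ and add, which produces the full parallelogram identity $\norm{x+y}_2^2+\norm{x-y}_2^2=2\norm{x}_2^2+2\norm{y}_2^2$ before specializing to the unit circle. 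Your organization buys a little more for the same price: since the parallelogram identity holds for arbitrary $x,y$, the remark after the proposition (that $x,y$ may be taken anywhere in the ball) and the inner-product generalization in Proposition 6 fall out of the identical computation---exactly as the paper later hints when it calls the Parallelogram Rule ``a substitute for the Law of Cosines.'' Your explicit treatment of the endpoint $\epsilon=2$ as vacuous is also correct, and is a detail the paper glosses over since $\sqrt{1-\tfrac14\epsilon^2}=0$ there.
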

\begin{proof}
    In Figure \ref{fig:R2ball}, apply the Law of Cosines to the triangles $\triangle Oxy$ and $\triangle Oxz$ to see \[
        \cos(\angle x) = \frac{\norm{x}_2^2+\norm{x-y}_2^2-\norm{y}_2^2}{2\norm{x}_2\norm{x-y}_2}
        = \frac{\norm{x}_2^2+\norm{x-z}_2^2-\norm{z}_2^2}{2\norm{x}_2\norm{x-z}_2},
    \] which, upon noting that $\norm{x}_2=\norm{y}_2=1$ and $x-z=\tfrac{x-y}{2}$, yields \[
        \frac{1}{2}\norm{x-y}_2^2 = 1+\frac{1}{4}\norm{x-y}_2^2-\norm{z}_2^2.
    \] Thus, if $\norm{x-y}_2 > \epsilon$, then $\norm{z}_2<\delta$ provided that $\delta>\sqrt{1-\tfrac{1}{4}\epsilon^2}$.
\end{proof}
A trivial modification to the above argument shows that we can even take $x$ and $y$ to lie anywhere
in the ball, not only on the boundary. This motivates the following definition.
\begin{definition}
    A normed vector space $(X,\norm{\cdot})$ is \emph{uniformly convex} if for every ${\epsilon\in(0,2]}$
    there is a  $\delta\in(0,1)$  such that, whenever $x,y\in\ball{X}$ with $\norm{x-y}>\epsilon$,
    we have $\norm{\tfrac{x+y}{2}}<\delta$.
\end{definition}
Intuitively, the convexity is \emph{uniform} in the sense that given any two points with a prescribed separation,
we have a uniform bound on their midpoint.

Thus, Proposition \ref{prop:R2uniformconvex} tells us that $\R^2$ with the Euclidean norm is uniformly convex.
(In fact, the argument clearly generalizes to $\R^n$ for all $n\geq 2$, and the statement is trivial for $n=1$.)
The next corollary is immediate.
\begin{corollary}\label{cor:uconimplsconv}
    Any uniformly convex space is strictly convex.
\end{corollary}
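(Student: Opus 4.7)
The plan is to argue by contrapositive: assume $X$ is not strictly convex, and produce a pair of points $x,y \in \ball{X}$ with positive separation yet with $\norm{\tfrac{x+y}{2}}$ arbitrarily close to $1$, thereby violating uniform convexity.

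Concretely, suppose $X$ fails to be strictly convex. Then there exist $x, y$ on the boundary of $\ball{X}$ with $\norm{\tfrac{x+y}{2}} = 1$ but $x \neq y$. Set $\epsilon := \tfrac{1}{2}\norm{x-y}$, which lies in $(0,1]\subset (0,2]$ because $x\neq y$ and $\norm{x-y}\leq\norm{x}+\norm{y}=2$ by the triangle inequality. Since $\norm{x-y} = 2\epsilon > \epsilon$, uniform convexity furnishes some $\delta \in (0,1)$ with $\norm{\tfrac{x+y}{2}} < \delta < 1$. This contradicts $\norm{\tfrac{x+y}{2}} = 1$, so $X$ must be strictly convex.

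There is no real obstacle here: once one picks any $\epsilon$ strictly less than $\norm{x-y}$ (e.g.\ $\epsilon = \tfrac{1}{2}\norm{x-y}$), the definition of uniform convexity does the rest. The only minor bookkeeping is to confirm that the chosen $\epsilon$ lies in the admissible range $(0,2]$, which is immediate from the triangle inequality and $x\neq y$.
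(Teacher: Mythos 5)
Your argument is correct and is exactly the ``immediate'' argument the paper has in mind (the paper states the corollary without proof): given a violating pair $x\neq y$ with $\norm{\tfrac{x+y}{2}}=1$, any $\epsilon<\norm{x-y}$ in $(0,2]$ forces $\norm{\tfrac{x+y}{2}}<\delta<1$, a contradiction. The bookkeeping that $\epsilon=\tfrac{1}{2}\norm{x-y}\in(0,1]\subset(0,2]$ is right, so there is nothing to add.
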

Whence we derive a rigorous proof of our earlier equation \eqref{eqn:R2strict}.

It should not have escaped the attentive reader that such convexity properties speak more to the virtues of the \emph{norm}
than the underlying vector space.
\begin{figure}[t]
    \centering
    \includegraphics[scale=0.4]{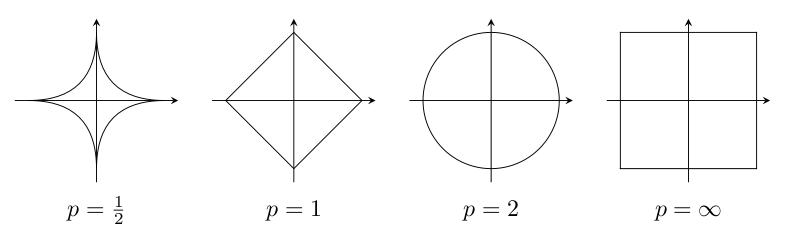}
    \caption{\label{fig:Lpballs}Balls in $(\R^2,\norm{\cdot}_p)$ for various values of $p>0$.
    (Modified from Go Further, CC BY 4.0 $\langle$https://creativecommons.org/licenses/by/4.0$\rangle$, via Wikimedia Commons).}
\end{figure}
Indeed, if we replace the Euclidean norm on $\R^n$ with the $1$-norm or ``Manhattan Norm'' \[
    \norm{x}_1 := \sum_{k=1}^n\abs{x_k},
\] then we lose uniform convexity.
For $n=2$ this is geometrically obvious: the unit ball is now a square (see Figure \ref{fig:Lpballs}),
so that strict convexity fails along the ``straight edges'' of the boundary.
Hence, $(\R^2,\norm{\cdot}_1)$ is not uniformly convex, thanks to Corollary \ref{cor:uconimplsconv}.
We encourage the reader to prove the generalization of this unfortunate fact to $n\geq 2$.

After warming up in finite dimensions, the next ``nice'' example of a normed space is the inner product space.
A moment's reflection on the proof of Proposition \ref{prop:R2uniformconvex} should convince the reader that such spaces are indeed uniformly convex.
\begin{proposition}
    If $(X, \pair{\cdot,\cdot})$ is an inner product space with norm $\norm{x}:=\sqrt{\pair{x,x}}$, then $(X,\norm{\cdot})$ is uniformly,
    hence also strictly convex.
\end{proposition}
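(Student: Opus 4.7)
The plan is to reduce uniform convexity to a single identity, the \emph{parallelogram law}, which encodes the extra geometric information that an inner product space carries over a generic normed space. Specifically, by expanding $\pair{x\pm y,x\pm y}$ and adding, one gets
\[
    \norm{x+y}^2 + \norm{x-y}^2 = 2\norm{x}^2 + 2\norm{y}^2 \qfa x,y\in X.
\]
This is the inner-product analogue of the Law of Cosines used in Proposition~\ref{prop:R2uniformconvex}, and I expect it to do essentially all the work.

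Given $\epsilon\in(0,2]$ and $x,y\in\ball{X}$ with $\norm{x-y}>\epsilon$, I would rearrange the parallelogram law to isolate $\norm{x+y}^2$, then bound the right-hand side using $\norm{x}^2,\norm{y}^2\leq 1$ and $\norm{x-y}^2>\epsilon^2$. This gives
\[
    \norm{\frac{x+y}{2}}^2 = \frac{1}{2}\norm{x}^2 + \frac{1}{2}\norm{y}^2 - \norm{\frac{x-y}{2}}^2 < 1 - \frac{\epsilon^2}{4},
\]
and so taking any $\delta$ with $\sqrt{1-\epsilon^2/4}\leq \delta<1$ suffices. (For $\epsilon=2$ the hypothesis is vacuous in $\ball{X}$, and any $\delta\in(0,1)$ works.) Strict convexity then follows at once from Corollary~\ref{cor:uconimplsconv}.

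There is no real obstacle here: once the parallelogram law is in hand, the argument is a one-line bound, and the main task is simply to state and justify that identity cleanly. It is worth emphasising in the exposition that this is precisely the same geometric content as the Law of Cosines argument in $\R^2$, now elevated to an abstract identity valid in every inner product space; this is what makes every such space, regardless of dimension, uniformly convex.
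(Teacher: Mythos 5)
Your argument is correct and is exactly the one the paper intends: the paper's proof is simply the remark that this is ``a standard exercise in the use of the Parallelogram Rule, which functions as a substitute for the Law of Cosines,'' and you carry out precisely that exercise, including the harmless $\epsilon=2$ edge case. Nothing further to add.
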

The proof is a standard exercise in the use of the Parallelogram Rule, which functions as a substitute for the Law of Cosines.%
\footnote{In fact, one can use the Law of Cosines to \emph{define} angles in a real inner product space as follows:
given linearly independent vectors $x,y$, their \emph{angle} is the unique $\alpha\in(0,\pi)$ such that \[
    \cos\alpha = \frac{\norm{x}^2+\norm{y}^2-\norm{x-y}^2}{2\norm{x}\norm{y}} = \frac{\pair{x,y}}{\norm{x}\norm{y}}.
\]}

Using this fact we can easily devise a norm that is strictly but not uniformly convex.
\begin{example}
    On the vector space $\ell^1$, define the norm \[
        \norm{x} := \norm{x}_{\ell^1}+\norm{x}_{\ell^2}.
    \] If $x$ and $y$ are distinct points on the ball of $(\ell^1,\norm{\cdot})$, then we have \[
        \norm{x+y}_{\ell^2} < \norm{x}_{\ell^2} + \norm{y}_{\ell^2},
    \] as a consequence of Proposition 6. Hence, \[
        \norm{x+y} < \left(\norm{x}_{\ell^1} + \norm{y}_{\ell^1}\right)
        + \left(\norm{x}_{\ell^2} + \norm{y}_{\ell^2}\right) = \norm{x} + \norm{y}  = 2.
    \] Thus, the space is strictly convex. However, if for each $N\in\N$ we define
    two $\ell^1$ sequences $x_N := (x_{N,k})_k$ and $y_N := (y_{N,k})_k$ by \[
        x_{N,k} := \begin{cases}
            1& \text{if } k \leq N,\\
            0& \text{otherwise},
        \end{cases} \qand
        y_{N,k} := \begin{cases}
            1& \text{if } N < k \leq 2N,\\
            0& \text{otherwise},
        \end{cases}
    \] then \[
        \norm{x_N} = \norm{y_N} = N + \sqrt{N} \qand \norm{x_N-y_N} = 2N + \sqrt{2N},
    \] but \[
        \norm{\frac{x_N+y_N}{2}} = \frac{2N}{2} + \frac{\sqrt{2N}}{2};
    \] dividing throughout by $N+\sqrt{N}$ shows that we can make $\tfrac{\norm{x_N+y_N}}{2(N+\sqrt{N})}$
    as close as we like to 1, while keeping $\frac{\norm{x_N-y_N}}{N+\sqrt{N}}\geq \sqrt{2}$.
    Hence, $(\ell^1,\norm{\cdot})$ is not uniformly convex.
\end{example}

Luckily, \emph{most} $\ell^p$---and, indeed, $L^p(\mu)$---spaces are uniformly convex.

\begin{theorem}
    If $1<p<\infty$, then $L^p$ is uniformly convex.
\end{theorem}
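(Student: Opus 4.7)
The plan is to establish Clarkson's inequalities and extract uniform convexity from them. For $2\leq p<\infty$ the target is
\[
    \norm{\tfrac{f+g}{2}}_p^p + \norm{\tfrac{f-g}{2}}_p^p \leq \tfrac{1}{2}\left(\norm{f}_p^p + \norm{g}_p^p\right).
\]
Granted this, uniform convexity is immediate: for $f,g\in\ball{L^p}$ with $\norm{f-g}_p>\epsilon$, the inequality gives $\norm{(f+g)/2}_p^p\leq 1-(\epsilon/2)^p$, so the constant $\delta:=(1-(\epsilon/2)^p)^{1/p}\in(0,1)$ works.

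For $p\geq 2$, I would reduce the above to the pointwise estimate
\[
    \abs{a+b}^p + \abs{a-b}^p \leq 2^{p-1}\bigl(\abs{a}^p + \abs{b}^p\bigr) \qfa a,b\in\C,
\]
which follows by chaining the parallelogram identity $\abs{a+b}^2+\abs{a-b}^2=2(\abs{a}^2+\abs{b}^2)$ with two standard comparisons between the $\ell^p$ and $\ell^2$ norms on $\R^2$ (one in each direction), both valid for $p\geq 2$ thanks to the monotonicity of power means and the convexity of $s\mapsto s^{p/2}$. Integrating the pointwise bound termwise over the measure space and dividing by $2^p$ then produces Clarkson's inequality, and hence uniform convexity, in this range.

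The range $1<p<2$ is the main obstacle, because the exponent $p/2$ is now less than $1$ and the convexity of $s\mapsto s^{p/2}$ reverses to concavity, so the naive pointwise estimate fails. The standard remedy is the \emph{dual} Clarkson inequality
\[
    \norm{\tfrac{f+g}{2}}_p^{p'} + \norm{\tfrac{f-g}{2}}_p^{p'} \leq \bigl(\tfrac{1}{2}\norm{f}_p^p + \tfrac{1}{2}\norm{g}_p^p\bigr)^{p'-1},
\]
where $p'=p/(p-1)\geq 2$ is the conjugate exponent; uniform convexity follows as before with $\delta:=(1-(\epsilon/2)^{p'})^{1/p'}$. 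Its proof combines a pointwise inequality $\abs{a+b}^{p'}+\abs{a-b}^{p'}\leq 2(\abs{a}^p+\abs{b}^p)^{p'-1}$ (reducible to a one-variable calculus problem by homogeneity) with a \emph{reverse} Minkowski inequality applied with exponent $p-1<1$, which lets us pass from the integrated pointwise estimate back to the norms $\norm{f+g}_p^{p'}$ and $\norm{f-g}_p^{p'}$. This second step, exploiting that fractional-exponent ``norms'' satisfy a reversed triangle inequality, is the real technical heart of the proof, and is what makes the case $1<p<2$ substantially more delicate than $p\geq 2$.
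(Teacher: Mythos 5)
Your proposal is correct in outline, but note that the paper does not prove this theorem at all: its ``proof'' is a citation to Theorem 4.7.15 of Bogachev, so you are supplying an argument where the paper supplies none. The route you choose---Clarkson's inequalities---is the classical one (and is essentially what the cited reference does). For $2\leq p<\infty$ your reduction is sound: the pointwise bound $\abs{a+b}^p+\abs{a-b}^p\leq 2^{p-1}(\abs{a}^p+\abs{b}^p)$ does follow from the parallelogram identity sandwiched between the two $\ell^2$--$\ell^p$ comparisons on $\R^2$, integration gives Clarkson's first inequality, and the extraction of $\delta=(1-(\epsilon/2)^p)^{1/p}$ matches the paper's Definition of uniform convexity (the endpoint $\epsilon=2$, where this $\delta$ degenerates to $0$, is vacuous since $\norm{f-g}_p\leq 2$ on the ball). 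For $1<p<2$ you correctly identify both the right statement of the second Clarkson inequality (the exponent $p'-1=1/(p-1)$ on the right-hand side is the standard one) and the two genuinely nontrivial ingredients: the scalar inequality $\abs{a+b}^{p'}+\abs{a-b}^{p'}\leq 2(\abs{a}^p+\abs{b}^p)^{p'-1}$ and the reversed Minkowski inequality for the exponent $p-1<1$ (which indeed goes in the direction you need, giving a lower bound for the $L^{p-1}$ ``norm'' of a sum of nonnegative functions). Both are left as black boxes, and the scalar inequality in particular is where essentially all of the work in this range is concentrated; as a sketch this is acceptable, but a complete write-up would need to carry out that one-variable reduction. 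What your approach buys over the paper's is a self-contained, elementary proof; what it costs is about two pages of calculus that the paper avoids by outsourcing.
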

\begin{proof}
    See Theorem 4.7.15 in \cite{Bogachev07}.
\end{proof}

It is not hard to construct counterexamples to strict convexity in $L^1$ and $L^\infty$.
\begin{example}\label{ex:ellinfty}
    Consider the following elements in $\ell^\infty$: \[
        x := (1,1,0,\ldots) \qand y := (0,1,1,0,\ldots).
    \] Then, clearly \[
        \norm{x}_{\ell^\infty} = \norm{y}_{\ell^\infty} = 1,
        \qtext{but} \norm{x+y}_{\ell^\infty} = 2.
    \] Hence, $\ell^\infty$ is not strictly convex
    and so, by Corollary 5, it is also not uniformly convex.
\end{example}

\section{Beyond the triangle inequality.}
So what can one do with these various flavors of convexity? As it turns out, quite a lot!

In this section, we will pick three more advanced topics---Hahn--Banach extensions,
weak convergence, and reflexivity---motivating and defining each one as we go.
We will then see how convexity and its cousins interact with these notions
in surprising ways.

First we need to (re)acquaint ourselves with the (topological) dual of a normed vector space.
A linear \emph{functional} on $X$ is just a fancy name for a linear map $f\colon X \to \mathbb{K}$,
where $\mathbb{K}$ is the field of the vector space $X$ (usually $\R$ or $\C$).
Such a map is continuous if, and only if, it is \emph{bounded} in the sense that
the restriction of $f$ to $\ball{X}$ is bounded.\footnote{%
Of course, linear maps cannot literally be bounded since $\abs{f(n x)} = n\abs{f(x)} \to \infty$ as $n\to \infty$!
But this is ``cheating": $f$ is ``really'' only interesting on $\partial(\ball{X})$, i.e., when $\norm{x}=1$,
since $f(x) = \norm{x}f(x/\norm{x})$ ($x\neq0$).}
The set of bounded linear functionals on $X$ is called the \emph{topological dual} or \emph{dual}, for short,
denoted by $X^*$.
(The ``algebraic dual'' is just the set of linear functionals, no continuity required.)
If you haven't ever done so, go ahead and check that $\norm{f}_{X^*} :=\sup \{\abs{f(x)}\colon \norm{x}=1\}$
is a norm on $X^*$---I'll wait.

It is common to write $\pair{f,x}$ for $f(x)$, since this gives us a pleasing symmetry:
we can think of $f\in X^*$ as a function on $X$: $x\mapsto f(x)$;
or we can think of $x\in X$ as a function on $X^*$: $f\mapsto f(x)$ (evaluation at $x$).
We easily check that this is a bounded linear functional, and so belongs to the dual of $X^*$:
the \emph{double dual}, $X^{**}$.

Why should we care about dual spaces? Because they are often much more helpful
than the original vector spaces!

As a warm up, recall from linear algebra that the \emph{only} linear functionals
are the maps $T\colon \mathbb{R}^n\to\R$ of the form $T(v) = u\cdot v$ for some vector $u\in\R^n$.
(This is obvious if you remember that $T$ has to be an $n\times 1$ matrix!)

This is also true in a general inner product space...provided we assume that it is
complete with respect to the norm $\norm{u} = \sqrt{\pair{u,u}}$.
We call a complete inner product space a \emph{Hilbert space},
and it comes with the following powerful theorem.
\begin{theorem}[Riesz Representation Theorem]
    Let $H$ be a Hilbert space with the inner product $\pair{\cdot,\cdot}$,
    and consider a bounded linear functional $T\colon H\to\mathbb{K}$.
    Then, there is a unique vector $u\in H$ such that $T(v)=\pair{u,v}$
    for all $v\in H$. Furthermore, $\norm{T}_{H^*} = \norm{u}$.
\end{theorem}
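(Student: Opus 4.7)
The plan is to emulate the standard finite-dimensional argument, with the Hilbert projection theorem playing the role of a coordinate decomposition. First I would dispose of the trivial case $T\equiv 0$ by taking $u=0$, so assume $T\neq 0$ and set $M:=\ker T$, a proper closed subspace of $H$ (closed by continuity of $T$). The heart of the proof is to exhibit a nonzero vector $w$ orthogonal to $M$: this is where the convexity machinery of the paper pays real dividends. The \emph{Hilbert projection theorem}---that every nonempty closed convex subset of a Hilbert space admits a unique nearest point to any prescribed element---rests on the Parallelogram Rule, which forces any minimizing sequence to be Cauchy, whereupon completeness delivers the minimizer. Applied to $M$ and any $x\notin M$, one obtains a nearest point $m\in M$; setting $w:=x-m\neq 0$ and perturbing $m$ to $m+ty$ for $y\in M$ gives a quadratic in $t$ whose minimum at $t=0$ forces $\pair{w,y}=0$ for every $y\in M$.

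With $w$ in hand, the construction of $u$ is a one-line algebraic trick: for any $v\in H$, the combination $T(v)w-T(w)v$ lies in $M$ (since $T$ annihilates it), so pairing with $w$ yields \[ T(v)\norm{w}^2 = T(w)\pair{w,v}. \] Rewriting the right-hand side in the form $\pair{u,v}$ identifies $u$ as an explicit scalar multiple of $w$ (namely $\overline{T(w)}\,w/\norm{w}^2$, which collapses to $T(w)\,w/\norm{w}^2$ in the real case).

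The remaining claims are routine. Uniqueness is immediate: if $\pair{u_1-u_2,v}=0$ for every $v\in H$, substitute $v=u_1-u_2$ and invoke positive-definiteness of the inner product. The norm identity $\norm{T}_{H^*}=\norm{u}$ follows by Cauchy--Schwarz on one side and by evaluating $T$ at $u/\norm{u}$ on the other. The only genuine obstacle is therefore the projection theorem itself; once that geometric fact is in place, the rest of the argument essentially writes itself, and it is rather satisfying that the Parallelogram Rule---a quantitative avatar of uniform convexity---does all the heavy lifting.
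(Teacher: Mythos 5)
The paper does not actually prove this theorem---it simply cites Theorem I.3.4 of Conway---so there is no in-house argument to compare against; your proposal supplies the standard proof (and, as it happens, essentially the one Conway gives): quotient out $\ker T$, produce a unit of "orthogonal direction" via the projection theorem, and rescale. The argument is correct. The decomposition $T(v)w - T(w)v \in \ker T$ is the right trick, the uniqueness and norm computations are exactly as you say, and your observation that the projection theorem is the only nontrivial input---and that it runs on the Parallelogram Rule, i.e., on the uniform convexity of the Hilbert norm---is well taken and fits the theme of the paper nicely. One small point to tidy if you write this out in full over $\mathbb{K}=\C$: perturbing $m$ to $m+ty$ with \emph{real} $t$ only yields $\operatorname{Re}\pair{w,y}=0$; you need to also perturb along $ity$ (or replace $y$ by $e^{i\theta}y$) to conclude $\pair{w,y}=0$. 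Likewise the final formula $u=\overline{T(w)}\,w/\norm{w}^2$ presumes the convention that $\pair{\cdot,\cdot}$ is linear in its second slot, which is what the statement $T(v)=\pair{u,v}$ implicitly requires; with the opposite convention the conjugate moves. Neither issue is a gap, just bookkeeping.
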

\begin{proof}
    See Theorem I.3.4.~in \cite{Conway2007}.
\end{proof}

\begin{example}
    Let $f\in C^2[0,1]$ and consider the ODE
    \begin{equation}\label{eqn:ode}
        -u'' + u = f \qtext{with} u(0) = u(1) = 0.
    \end{equation}
    Suppose we have a solution $u\in C^2[0,1]$.
    If we multiply by any $\phi\in C^2[0,1]$ with $\phi(0)=\phi(1)=0$
    and integrate by parts, we obtain
    \begin{equation}\label{eqn:weakinnerp}
        \int_0^1 (u'\phi' + u\phi) = \int_0^1f\phi.
    \end{equation}
    If we can solve \emph{this} equation for some $u$ (and arbitrary $\phi$ as above),
    then we will have solved (\ref{eqn:ode}).
    To do this, define the inner product and norm \[
        \pair{u,\phi}_{\star} := \int_0^1 (u'\phi' + u\phi)
        \qand \norm{\phi}^2_{\star} = \int_0^1\left[(\phi')^2+\phi^2\right],
    \] and consider $H_0^1[0,1]$, the completion of
    \[C_0^2[0,1]:=\{\phi\in C^2[0,1]:\phi(0)=\phi(1)=0\}\]
    with respect to $\norm{\cdot}_{\star}$.
    (Note that $\norm{f}_{\star}$ still makes sense for $f\in C^2[0,1]$,
    since $f$ and $f'$ are continuous and bounded on $[0,1]$.)

    As discussed, the only bounded linear functionals on $H_0^1$ are those of the form
    $\phi\mapsto \pair{u,\phi}_{\star}$ for some $u\in H^1_0[0,1]$.
    What about the functional $\phi\mapsto \int_0^1 f\phi$?
    This is clearly linear, and bounded by the Cauchy--Schwarz inequality:
    \[
        \int_0^1 f\phi \leq \left(\int_0^1 f^2\right)^{1/2}\left(\int_0^1\phi^2\right)^{1/2}
        \leq \norm{f}_{\star}\norm{\phi}_{\star}.
    \] Hence, by the Riesz Representation Theorem there is a $u\in H_0^1[0,1]$ such that \[
        \pair{u, \phi}_{\star} = \int_0^1 f\phi \qfa \phi \in C^2_0[0,1].
    \] Thus, $u$ is a solution to (\ref{eqn:weakinnerp}).
    The extra step of showing that $u\in C^2[0,1]$ actually solves (\ref{eqn:ode}) would take
    us too far afield. (The interested reader may consult chapter 6 of \cite{Evans10}.)
\end{example}

The moral of this example is that working with linear functionals makes our life easier:
by constructing an appropriate functional in the dual space,
we can deduce the existence of a solution to a problem in the original space.

\paragraph{Hahn--Banach Extensions.}
What if we don't have an inner product, but just a norm?
We will still assume completeness, of course; a complete normed space is a \emph{Banach space}.

While we cannot say that \emph{every} functional in $X^*$ arises from an ``inner product''
(because we don't have inner products), we can often construct linear functionals
by defining them on smaller subsets and then extending them to the whole space.

A famous result called the \emph{Hahn--Banach Theorem} implies that we can find an extension
of $f$ to the whole of $X$ without decreasing its norm.
\begin{proposition}\label{prop:HBext}
    Let $X$ be a Banach space with norm $\norm{\cdot}$ and $U\subset X$ be a subspace.
    If $f\colon U\to\mathbb{K}$ is a bounded linear functional on $U$,
    then $f$ admits an extension $g\in X^*$ with the following two properties:
\begin{enumerate}
    \item $g$ agrees with $f$ on $U$: $g|_U \equiv f$; and
    \item $g$ has the same norm as $f$: $\norm{g}_{X^*} = \norm{f}_{U^*}$.
\end{enumerate}

\end{proposition}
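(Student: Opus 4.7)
My plan is the classical two-step argument: first, a \emph{one-dimensional extension lemma} showing that $f$ can be extended from $U$ to $U \oplus \R x_0$ (for any $x_0 \notin U$) without increasing its norm; then Zorn's Lemma chains these extensions together until $X$ is exhausted. The complex scalar case I would reduce to the real case via the decomposition $f(x) = \operatorname{Re} f(x) - i\operatorname{Re} f(ix)$, which uses only the $\C$-linearity of $f$.

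For the one-dimensional step over $\R$, an extension $g$ to $V := U \oplus \R x_0$ is determined entirely by $\alpha := g(x_0)$, and the norm-preserving requirement $\abs{f(u) + t\alpha} \leq \norm{f}_{U^*}\norm{u+tx_0}$ for all $u\in U$, $t\in\R$ reduces, by homogeneity in $t$, to finding a real $\alpha$ inside the intersection
\[
\bigcap_{u \in U} \bigl[-f(u) - \norm{f}_{U^*}\norm{u+x_0},\ -f(u) + \norm{f}_{U^*}\norm{u+x_0}\bigr].
\]
This intersection is nonempty iff $f(u_2) - f(u_1) \leq \norm{f}_{U^*}(\norm{u_1+x_0} + \norm{u_2+x_0})$ for all $u_1, u_2 \in U$, and this is exactly the content of the triangle inequality applied to the decomposition $u_2 - u_1 = (u_2+x_0) - (u_1+x_0)$, together with the bound $\abs{f(u_2 - u_1)} \leq \norm{f}_{U^*}\norm{u_2 - u_1}$. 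This is the pivotal (and really the only substantive) use of the triangle inequality in the proof.

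With the one-dimensional lemma in hand, I would apply Zorn's Lemma to the poset $\mathcal{P}$ of all pairs $(V, g_V)$, where $V \supseteq U$ is a subspace of $X$ and $g_V \in V^*$ is a norm-preserving extension of $f$, ordered by extension. This poset is nonempty (it contains $(U,f)$), and any chain admits an upper bound given by the union of domains with the functional defined coherently on it. A maximal element $(V^*, g^*)$ must then satisfy $V^* = X$; otherwise, picking $x_0 \in X \setminus V^*$ and applying the one-dimensional lemma would produce a proper extension, contradicting maximality.

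Finally, for $\mathbb{K} = \C$, let $u := \operatorname{Re} f$; this is $\R$-linear with $\norm{u} \leq \norm{f}_{U^*}$, and $\C$-linearity of $f$ forces $f(x) = u(x) - iu(ix)$. Extend $u$ to an $\R$-linear $\tilde u\colon X \to \R$ of the same norm by the real case, and set $g(x) := \tilde u(x) - i\tilde u(ix)$. A brief verification confirms that $g$ is $\C$-linear, extends $f$, and has $\norm{g}_{X^*} = \norm{\tilde u} \leq \norm{f}_{U^*}$, the non-obvious inequality following by choosing, for each $x \in X$, a phase $\theta$ with $\abs{g(x)} = \operatorname{Re}(e^{-i\theta} g(x)) = \tilde u(e^{-i\theta} x)$. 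The reverse bound $\norm{g}_{X^*} \geq \norm{f}_{U^*}$ is automatic since $g$ extends $f$. The main obstacle throughout is really just the one-dimensional step; everything else is essentially formal.
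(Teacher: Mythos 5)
Your proposal is correct and complete: the one-dimensional extension via the interval-intersection criterion, the Zorn's Lemma chain argument, and the reduction of the complex case through $f(x)=\operatorname{Re}f(x)-i\operatorname{Re}f(ix)$ are all sound, and you correctly identify the triangle inequality applied to $u_2-u_1=(u_2+x_0)-(u_1+x_0)$ as the crux. The paper itself offers no proof of this proposition---it simply cites section III.6 of Conway---and your argument is precisely the standard one found there, so there is nothing to compare beyond noting that you have supplied what the paper outsources.
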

A functional $g$ as above is called a \emph{Hahn--Banach extension of $f$}.
(For a proof, as well as a discussion of the Hahn--Banach Theorem and its consequences,
see section III.6 in \cite{Conway2007}.)

Let us see how this works in an example.
\begin{example}\label{ex:ext}
    Let $x\in X$. We seek a functional $g\in X^*$ that tells us what the norm of $x$ is.
    For instance, we could consider the 1-dimensional subspace $Y:= \left\{\alpha x : \alpha\in\mathbb{K}\right\}$
    and the functional $f\colon Y\to\mathbb{K}$ defined as follows: \[
        f(\alpha x) := \alpha\norm{x}.
    \] Now, $f(x) = f(1\cdot x) = \norm{x}$.
    Furthermore, $\abs{f(\alpha x)} = \abs{\alpha}\norm{x} = \norm{\alpha x}$.
    On the other hand, $f(\frac{1}{\norm{x}}\cdot x) = \frac{\norm{x}}{\norm{x}}=1$.
    Hence, $\norm{f}_{Y^*} = 1$.
    But $f\notin X^*$ since $f$ is only defined on $Y$.
    Here is where the Hahn--Banach Theorem comes to the rescue,
    as it guarantees that there is an extension $g\colon X\to\mathbb{K}$
    of $f$ with the same norm, i.e.~$\norm{f}_{Y^*}=\norm{g}_{X^*}$.
\end{example}

In a Hilbert space, such extensions are unique, as can be seen from the Riesz Representation Theorem.
As it turns out, \emph{strict convexity} (hence also uniform convexity) of the dual space
is enough to yield uniqueness in a general Banach space.
\begin{theorem}
    Let $X$ be a Banach space with strictly convex dual $X^*$.
    Suppose $U\subset X$ is a subspace and $f:U\to\mathbb{K}$ a bounded linear functional.
    Then $f$ has a unique Hahn--Banach extension in $X^*$.
\end{theorem}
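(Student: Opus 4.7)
The plan is to argue by contradiction: assume $f$ has two Hahn--Banach extensions $g_1,g_2\in X^*$ and show that strict convexity of $X^*$ forces $g_1=g_2$. The bridge from ``two extensions'' to ``two unit-sphere points with unit-norm midpoint'' is the key idea.

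First I would dispose of the trivial case $f=0$, for which both extensions have norm zero and so are zero. Otherwise, by homogeneity I may rescale so that $\norm{f}_{U^*}=1$; then any Hahn--Banach extension of $f$ has norm exactly $1$ in $X^*$, i.e.~lies on $\partial(\ball{X^*})$. So $g_1,g_2$ both sit on the unit sphere of $X^*$.

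Next I would examine their midpoint $h := \tfrac{g_1+g_2}{2}$. Two inequalities are in play. On the one hand, the triangle inequality in $X^*$ gives $\norm{h}_{X^*}\leq\tfrac{1}{2}(\norm{g_1}_{X^*}+\norm{g_2}_{X^*}) = 1$. On the other hand, $h$ is still a linear extension of $f$ from $U$ to $X$ (since $g_1|_U=g_2|_U=f$), and an extension cannot decrease the norm: $\norm{h}_{X^*}\geq\sup_{u\in U,\,\norm{u}\leq 1}\abs{h(u)} = \norm{f}_{U^*}=1$. Combining these, $\norm{h}_{X^*}=1$.

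At this point the hypothesis does all the work: $g_1$ and $g_2$ lie on the boundary of $\ball{X^*}$ and their midpoint has norm $1$, so strict convexity of $X^*$ yields $g_1=g_2$. There is no real obstacle here beyond spotting the right combination: the proof is essentially the tautology that ``an average of extensions is an extension,'' cashed in against the definition of strict convexity. The only subtlety worth flagging in writing is the lower bound $\norm{h}_{X^*}\geq\norm{f}_{U^*}$, which should be stated explicitly rather than waved at, since it is precisely where the identity $h|_U=f$ is used.
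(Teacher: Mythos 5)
Your argument is correct and is essentially the paper's own proof: the paper likewise normalizes $\norm{f}_{U^*}=1$ and obtains $\norm{g+h}_{X^*}\geq 2$ by restricting the supremum defining the norm to $\ball{U}$, which is exactly your observation that the midpoint is still an extension of $f$ and hence has norm at least $\norm{f}_{U^*}=1$. Your write-up just makes the key lower bound slightly more explicit; nothing further is needed.
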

\begin{proof}
    The existence of an extension follows from the Hahn--Banach Theorem.
    Without loss of generality, we may assume that $\norm{f}=1$.
    So suppose that $g,h\in X^*$ and that both extend $f$ with norm $1$. Then
    \begin{align*}
        \norm{g+h}_{X^*} &= \sup_{x\in\ball{X}}\abs{\pair{g,x}+\pair{h,x}}\\
        &\geq \sup_{x\in \ball{U}}\abs{\pair{g,x}+\pair{h,x}}\\
        &= 2\sup_{x\in \ball{U}}\abs{\pair{f, x}} = 2.
    \end{align*} Thus, $\norm{g+h}=2$, so $g=h$ by the strict convexity of $X^*$.
\end{proof}

\begin{corollary}
    Hahn--Banach extensions are unique in $L^p$ ($1<p<\infty$).
\end{corollary}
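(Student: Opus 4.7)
The plan is to reduce this corollary directly to the previous theorem, which says that Hahn--Banach extensions are unique whenever the dual space is strictly convex. So the whole task is to verify that $(L^p)^*$ is strictly convex when $1<p<\infty$.

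First, I would invoke the standard duality result identifying $(L^p)^* \cong L^q$ isometrically, where $q$ is the conjugate exponent defined by $1/p + 1/q = 1$. Since $1 < p < \infty$, the conjugate $q$ also satisfies $1 < q < \infty$. (This identification is the $L^p$--$L^q$ duality theorem, a standard consequence of the Radon--Nikodym theorem; the excerpt does not prove it, but it is well known, and strict convexity is preserved under isometric isomorphism since the definition only involves the norm.)

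Next, I would apply the theorem from the excerpt stating that $L^q$ is uniformly convex whenever $1 < q < \infty$. Combined with Corollary \ref{cor:uconimplsconv}, which says that uniform convexity implies strict convexity, this yields that $(L^p)^*$ is strictly convex.

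Finally, the preceding theorem on uniqueness of Hahn--Banach extensions in Banach spaces with strictly convex dual applies verbatim to $X = L^p$, giving the conclusion. There is no serious obstacle here: the only non-trivial ingredient is the duality $(L^p)^* \cong L^q$, which is an imported fact, and everything else is a chain of implications already established in the paper. The proof is essentially a one-line citation of Theorem 7 (uniform convexity of $L^p$), Corollary 5, and the preceding uniqueness theorem.
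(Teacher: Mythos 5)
Your proposal is correct and is exactly the chain of implications the paper intends: the corollary is stated without proof, the implicit argument being that $(L^p)^* \cong L^q$ with $1<q<\infty$, which is uniformly convex by Theorem 7, hence strictly convex by Corollary 5, so the uniqueness theorem applies. You have simply made explicit (including the point that strict convexity transfers across the isometric identification) what the paper leaves to the reader.
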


This theorem not only grants us uniqueness in $L^p$,
but it also gives us a clue about where to find counter-examples to uniqueness if we relax the assumption
of strict convexity.
\begin{example}
    Recall that $(\ell^1)^* = \ell^\infty$.
    We will exploit Example \ref{ex:ellinfty} to find our extensions.
    Consider the space $U$ to be the span of $e_2=(0,1,0,\ldots)$.
    Define the functional $f\colon \ell^1 \to \C$ by\[
        \pair{f, (x_k)} := x_2,
    \] which is clearly bounded with norm $1$ over $U$.
    (As an element of $\ell^\infty$, $f = e_2$, since $\pair{(f_k),(x_k)} = \sum f_kx_k$,
    and $f$ is picking out the 2nd element of the sequence $(x_k)$.)
    Now take \[
        a := (1,1,0,\ldots) \qand  b := (0,1,1,0,\ldots).
    \] Both are in $\ball\ell^\infty$ and, letting $g$ and $h$ represent the functionals
    induced by $a$ and $b$, we have\[
        \pair{g, x} := \sum_{k}a_kx_k \qand \pair{h, x} := \sum_{k}b_kx_k,
    \] which both agree with $f$ on $U$.
\end{example}

\paragraph{Weak Convergence.}
Another nifty trick used by analysts to find solutions to problems
is to apply some notion of compactness.
\begin{example}[Mini Extreme Value Theorem]\label{exa:dmcv}
    Suppose we have a bounded, continuous function $f\colon[a,b]\to\R$.
    Let $m = \inf_{x\in [a,b]} f(x)$, which exists since $f$ is bounded.
    We want to find a point $x^*\in[a,b]$ where $m$ is attained, i.e., $f(x^*) = m$.
    Well, there is a sequence $x_n\in[a,b]$ such that $f(x_n)\to m$.
    But $[a,b]$ is sequentially compact (or has the Bolzano--Weierstrass property).
    Hence, there is a convergent subsequence $(x_{n_k})_k$ of $(x_n)_n$ with a limit $x^*\in[a,b]$.
    Since $f$ is continuous, it follows that $f(x_{n_k})\to f(x^*)$.
    On the other hand, $f(x_{n_k})\to m$. Hence, $m=f(x^*)$ since limits are unique.
\end{example}
Neat! However, in an infinite-dimensional vector space,
bounded sequences don't always have convergent subsequences.
\begin{example}
    Consider the sequence of sequences $e_n$ with a $1$ in the $n$th place
    and 0s everywhere else.
    In $\ell^2$, this sequence is clearly bounded: $\norm{e_n}_{\ell^2}=1$ for all $n$.
    But whenever $n\neq k$, the vectors $e_n$ and $e_k$ form a right-angle triangle
    in the $(e_n,e_k)$-plane, and so $\norm{e_n-e_k}_{\ell^2} = \sqrt{2}$.
    Thus, $(e_n)$ has no Cauchy subsequences in $\ell^2$, and therefore no convergent subsequences.
\end{example}

Once again, linear functionals come to the rescue.
\begin{definition} We say that a sequence $x_n\in X$ \emph{converges weakly} to $x\in X$ if \[
    \pair{f, x_n} \to \pair{f, x} \qfa f\in X^*.
\] We write $x_n\rightharpoonup x$.
\end{definition}

Oftentimes, weak convergence has nicer properties than norm convergence.
For instance, in $L^p$ spaces ($1<p<\infty$), bounded sequences have \emph{weakly}-convergent subsequences,
and this fact can be used to solve problems using the method form Example \ref{exa:dmcv}.
(This is explored, e.g., in chapter 8 of \cite{Evans10}.)

Clearly, norm convergence implies weak convergence.
The converse is not true in general. (Otherwise there would be little point to weak convergence, wouldn't there?)
However, \emph{sometimes} one can bump up weak convergence to norm convergence.
For instance, it is easy to show that, in a Hilbert space
\begin{equation}\label{eqn:wktostr}
    x_n \rightharpoonup x \text{ and } \norm{x_n} \to \norm{x} \qtext{together imply} x_n \to x.
\end{equation}
(The convergence on the right is norm convergence: $\norm{x_n-x}\to 0$.)
A space with this property is called a \emph{Riesz--Radon} space.
Thus, all Hilbert spaces are Riesz--Radon spaces.
We note that the converse to (\ref{eqn:wktostr}) holds in \emph{any} Hilbert or Banach space.

It turns out that uniformly convex spaces are also Riesz--Radon.
To show this, we record a simple technical lemma.
\begin{lemma}
    Let $(X,\norm{\cdot})$ be a uniformly convex space and $x_n,y_n\in\ball{X}$ such that
    $\norm{\tfrac{x_n+y_n}{2}}\to 1$. Then $\norm{x_n-y_n}\to 0$.
\end{lemma}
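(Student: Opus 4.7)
The plan is to argue by contrapositive. Suppose $\norm{x_n-y_n}$ does not converge to $0$. Then there is some $\epsilon\in(0,2]$ and a subsequence (which, abusing notation, I will still denote $(x_n,y_n)$) such that $\norm{x_n-y_n}>\epsilon$ for every $n$. Since $x_n,y_n\in\ball{X}$, the definition of uniform convexity supplies a $\delta\in(0,1)$, depending only on $\epsilon$, for which $\norm{\tfrac{x_n+y_n}{2}}<\delta<1$ for all $n$. This contradicts the assumption $\norm{\tfrac{x_n+y_n}{2}}\to 1$, so $\norm{x_n-y_n}\to 0$ after all.

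In more detail, the two steps are: first, extracting the ``bad'' subsequence where separation stays uniformly above some threshold $\epsilon$ (this is just the negation of $\norm{x_n-y_n}\to 0$ unpacked); second, feeding that subsequence into the uniform convexity hypothesis to get a uniform upper bound strictly below $1$ on the midpoint norms. The contradiction is then immediate because $\limsup_n\norm{\tfrac{x_n+y_n}{2}}\leq\delta<1$.

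There is no real obstacle here: the lemma is essentially a restatement of the definition of uniform convexity as a sequential implication. The only mild subtlety to flag is that uniform convexity is phrased for arbitrary points in $\ball{X}$ (not only boundary points), which is precisely what is needed since we have no control over whether $x_n$ and $y_n$ have norm exactly $1$. Nothing about completeness or any other structure of $X$ is used.
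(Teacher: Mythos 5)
Your proof is correct and is essentially identical to the paper's: both negate the conclusion, pass to a subsequence on which $\norm{x_n-y_n}$ stays bounded below by some $\epsilon>0$, and then invoke uniform convexity to cap $\norm{\tfrac{x_n+y_n}{2}}$ by some $\delta<1$, contradicting the hypothesis that these midpoint norms tend to $1$. Your closing remark that the definition must apply to all of $\ball{X}$, not just the boundary, is a fair observation and matches the definition the paper uses.
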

\begin{proof}
    Supposing $\norm{x_n-y_n}\not\to 0$, and passing to a subsequence if necessary,
    there is an $\epsilon>0$ such that $\norm{x_n-y_n}\geq \epsilon$ for all $n\geq 1$.
    Since the space is uniformly convex, there is a $\delta\in(0,1)$ such that \[
        1 = \lim_{n\to\infty}\norm{\frac{x_n+y_n}{2}} \leq \delta < 1,
    \] which is patent nonsense.
\end{proof}
\begin{corollary}
    If $x_n\in \ball{X}$, in a uniformly convex space $X$, are such that $\norm{\tfrac{x_n+x_m}{2}}\to 1$ as $n,m\to\infty$,
    then $(x_n)_n$ is Cauchy in $X$.
\end{corollary}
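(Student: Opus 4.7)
The plan is to reduce the corollary to the preceding lemma by a diagonal-style extraction. To show $(x_n)$ is Cauchy, I need to prove that $\norm{x_n - x_m} \to 0$ as $n, m \to \infty$ jointly. If this fails, then by negating the definition there exist $\epsilon > 0$ and index pairs $(n_k, m_k)$ with $n_k, m_k \to \infty$ such that $\norm{x_{n_k} - x_{m_k}} \geq \epsilon$ for all $k$.

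Next I would set $u_k := x_{n_k}$ and $v_k := x_{m_k}$, so that $u_k, v_k \in \ball{X}$. The hypothesis that $\norm{(x_n+x_m)/2} \to 1$ as $n, m \to \infty$ (i.e., for every $\eta > 0$ there is $N$ with $\norm{(x_n+x_m)/2} > 1 - \eta$ whenever $n, m \geq N$) immediately gives $\norm{(u_k + v_k)/2} \to 1$, since $n_k, m_k \to \infty$. The lemma then applies to the sequences $(u_k)$ and $(v_k)$ and yields $\norm{u_k - v_k} \to 0$, contradicting $\norm{u_k - v_k} \geq \epsilon$.

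An alternative, more direct route avoids the contradiction: given $\epsilon \in (0, 2]$, use uniform convexity to pick $\delta \in (0,1)$ such that any $u, v \in \ball{X}$ with $\norm{u - v} > \epsilon$ satisfy $\norm{(u+v)/2} < \delta$. The hypothesis provides $N$ such that $\norm{(x_n + x_m)/2} > \delta$ whenever $n, m \geq N$, and contrapositively this forces $\norm{x_n - x_m} \leq \epsilon$ for all such $n, m$. Either phrasing works, and both are short once the lemma is in hand.

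I do not anticipate any serious obstacle: the corollary is essentially a restatement of the lemma with a varying second index, and the only mild subtlety is to unpack the joint limit $n, m \to \infty$ carefully so as to feed a genuine sequence into the lemma or to invoke the uniform convexity modulus correctly.
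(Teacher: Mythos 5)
Your proposal is correct and matches the paper's intent: the corollary is stated without proof as an immediate consequence of the preceding lemma, and your first argument (extracting index pairs witnessing a failure of the Cauchy condition and feeding them into the lemma) is exactly the reduction the paper has in mind. Your alternative direct argument via the uniform convexity modulus is equally valid and arguably cleaner, but it is not a genuinely different route---it just inlines the lemma's proof.
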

\begin{theorem}
    Every uniformly convex Banach space is a Riesz--Radon space.
\end{theorem}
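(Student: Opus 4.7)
The plan is to reduce to unit vectors, use a Hahn--Banach norming functional to force the midpoints to approach the unit sphere, and then invoke the preceding lemma. Throughout, let $(x_n)$ be a sequence in $X$ with $x_n \rightharpoonup x$ and $\norm{x_n}\to\norm{x}$, and the goal is to show $\norm{x_n-x}\to 0$.

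First I would dispose of the trivial case $x=0$: here the hypothesis gives $\norm{x_n}\to 0$, which is exactly norm convergence. So I may assume $x\neq 0$, and by discarding finitely many terms, that $x_n\neq 0$ for every $n$. I would then normalize: set $\tilde x_n := x_n/\norm{x_n}$ and $\tilde x := x/\norm{x}$, both of norm $1$. A short check shows $\tilde x_n \rightharpoonup \tilde x$: for any $f\in X^*$, the identity $\pair{f,\tilde x_n}=\pair{f,x_n}/\norm{x_n}$ together with $\pair{f,x_n}\to\pair{f,x}$ and $\norm{x_n}\to\norm{x}\neq 0$ does the job. If I can show $\tilde x_n\to\tilde x$ in norm, then $x_n = \norm{x_n}\tilde x_n \to \norm{x}\tilde x = x$ in norm.

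The key step is to produce a functional that certifies $\norm{\tfrac{\tilde x_n+\tilde x}{2}}\to 1$. Applying the construction of Example \ref{ex:ext} to the vector $\tilde x$, I obtain (via Hahn--Banach, Proposition \ref{prop:HBext}) a functional $f\in X^*$ with $\norm{f}_{X^*}=1$ and $\pair{f,\tilde x}=\norm{\tilde x}=1$. Weak convergence $\tilde x_n\rightharpoonup\tilde x$ gives $\pair{f,\tilde x_n}\to 1$, hence
\[
    \pair{f,\tfrac{\tilde x_n+\tilde x}{2}} \longrightarrow 1.
\]
On the other hand, $\abs{\pair{f,\tfrac{\tilde x_n+\tilde x}{2}}}\leq \norm{\tfrac{\tilde x_n+\tilde x}{2}}\leq 1$ by the triangle inequality, so the middle quantity is squeezed between a sequence tending to $1$ and the constant $1$; thus $\norm{\tfrac{\tilde x_n+\tilde x}{2}}\to 1$.

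Finally, apply the preceding lemma with the two sequences $\tilde x_n$ and the constant sequence $\tilde x$ (both in $\ball{X}$): uniform convexity yields $\norm{\tilde x_n - \tilde x}\to 0$, and rescaling as above completes the proof. The main obstacle I anticipate is exactly the middle step—pushing $\norm{\tfrac{\tilde x_n+\tilde x}{2}}$ up to $1$—since weak convergence alone only gives lower semicontinuity of the norm, and one needs a concrete dual element (supplied by Hahn--Banach) to extract a matching upper bound.
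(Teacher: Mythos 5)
Your proposal is correct, and it follows the paper's strategy almost exactly: reduce to unit vectors, produce a norming functional via Hahn--Banach, squeeze the norm of the midpoints up to $1$, and invoke the preceding lemma. The one genuine difference is in the final step. The paper applies the lemma to the pair $(y_n, y_m)$, concluding that the normalized sequence is \emph{Cauchy}; it then needs completeness of $X$ to extract a limit $z$, and uniqueness of weak limits (again via Hahn--Banach) to identify $z$ with $y$. You instead apply the lemma to the pair $(\tilde x_n, \tilde x)$, with the second sequence constant, which delivers $\norm{\tilde x_n - \tilde x}\to 0$ in one stroke. Your version is slightly leaner: it dispenses with the completeness and weak-limit-uniqueness steps entirely, and in fact establishes the Riesz--Radon property for any uniformly convex \emph{normed} space, not just a Banach space. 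You also handle the $x=0$ case explicitly, which the paper waves away with ``we may assume.'' The only point worth making fully explicit in your write-up is the squeeze itself: from $\abs{\pair{f,\tfrac{\tilde x_n+\tilde x}{2}}}\leq\norm{\tfrac{\tilde x_n+\tilde x}{2}}\leq 1$ and the convergence of the left-hand side to $1$, you are taking a liminf on the left to pin the middle term; you state this correctly, so nothing is missing.
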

\begin{proof}
    We may assume that $x\neq 0$ alongside the hypotheses that \[
        x_n \rightharpoonup x \qand \norm{x_n} \to \norm{x}.
    \] Thus (eventually) $x_n\neq 0$ and we may define \[
        y_n := \frac{x_n}{\norm{x_n}} \qand y := \frac{x}{\norm{x}}.
    \] Clearly $y, y_n\in\ball{X}$ and also satisfy \[
        y_n \rightharpoonup y \qand \norm{y_n} \to \norm{y}.
    \]

    By the Hahn--Banach Theorem, there is a functional $f\in X^*$ such that \[
        \norm{f}_{X^*} = 1 \qand \pair{f, y} = 1.
    \] Since $y_n\rightharpoonup y$, it follows that \[
       1  = \pair{f,y} = \lim_{n,m\to\infty} \abs{\pair{f, \tfrac{y_n+y_m}{2}}} \leq \norm{\frac{y_n+y_m}{2}} \leq 1.
    \] Thus, by the previous lemma, $(y_n)$ is Cauchy, so $y_n\to z$ for some $z\in X$.
    But then $y_n\rightharpoonup z$.
    Again by the Hahn--Banach Theorem, weak limits are unique,
    so $y = z$, i.e., $y_n\to y$ in the norm topology of $X$.
    It is now easy to conclude that $x_n\to x$.
\end{proof}

We immediately gain a nice property from Hilbert spaces in all $L^p$ spaces, even if $p\neq 2$.
\begin{corollary}
    Every $L^p$ space ($1<p<\infty$) is a Riesz--Radon space.
\end{corollary}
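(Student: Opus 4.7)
The plan is direct: this corollary is a one-step consequence of combining two results already in hand. The earlier theorem (citing \cite{Bogachev07}) asserts that $L^p$ is uniformly convex for $1 < p < \infty$, and $L^p$ is of course a Banach space, completeness being a standard fact from measure theory. Applying the immediately preceding theorem---that every uniformly convex Banach space is Riesz--Radon---yields the claim.

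Since both ingredients have already been stated (or cited) and proved, there is no real obstacle to overcome; the corollary really is a corollary. Were I writing this out formally, the entire proof would essentially read: \emph{By the theorem above, $L^p$ is uniformly convex, and being complete it is a Banach space, so the previous theorem applies.} The only step requiring any comment at all is the observation that $L^p$ is indeed complete, which the reader is expected to take for granted.

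The substantive content, however, is worth flagging. This extends the familiar Hilbert-space fact---that weak convergence together with convergence of the norms implies norm convergence---from $L^2$ to the entire scale $L^p$ with $1 < p < \infty$, despite the absence of an inner product when $p \neq 2$. The uniform convexity of the unit ball is doing all the geometric work that the parallelogram rule did in the Hilbert case, which is exactly the philosophy advertised in the paper's introduction: a mild strengthening of the triangle inequality already buys a great deal.
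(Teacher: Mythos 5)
Your proposal is correct and is exactly the route the paper intends: combine the cited theorem that $L^p$ is uniformly convex for $1<p<\infty$ with the immediately preceding theorem that uniformly convex Banach spaces are Riesz--Radon. Nothing further is needed.
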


\paragraph{Reflexivity.}

It's easy to move ``up'' the scale of duality: from $X$ to its dual $X^*$, to \emph{its} dual, $X^{**}$,
and so on.
What is not so clear is how to climb back down.
In other words, given a description of $X^*$, can we deduce what $X$ must have looked like?

Recall that in $\R^n$, every functional in the dual is just an evaluation map,
i.e., $T(v) = \pair{u,v}$ for some $u\in\R^n$.
In general, without an inner product, elements of $X$ cannot be made to act on $X$ itself,
but they \emph{do} act on $X^*$:
for given $x\in X$, define the \emph{evaluation} $\hat{x}\colon X^*\to \mathbb{K}$ via \[
    \hat{x}(f) \equiv \pair{\hat{x},f} := \pair{f,x} \equiv f(x).
\] It is easy to see that such evaluation functionals are bounded, and so we see that \[
    \hat{X} := \left\{\hat{x} : x \in X\right\} \subset X^{**}.
\] In fact, the mapping $\hat{\cdot}\colon X\to X^{**}$ is linear, injective and preserves the norm: \[
    \norm{\hat{x}}_{X^{**}} = \norm{x}.
\] This last point follows from the Hahn--Banach theorem:
given any $x\in X$ there is an $f\in X^*$ with $\norm{f}_{X^*}=1$ and $\pair{f,x} = \norm{x}$.

Now, the million-dollar question is: when does $\hat{X} = X^{**}$, or, when is $x\mapsto\hat{x}$ surjective?

\begin{definition}
    A Banach space $X$ is \emph{reflexive} if $\hat{X} = X^{**}$.
\end{definition}

Clearly all finite-dimensional spaces as well as Hilbert spaces are reflexive,
as are $L^p$ for $1<p<\infty$, since $(L^p)^* = L^q$ when $\tfrac{1}{p}+\tfrac{1}{q}=1$,
so exchanging $p$ and $q$ yields the desired isomorphism.
(This is a little too quick, how can you make it rigorous?)
However, neither $L^1$ nor $L^\infty$ is reflexive.

Since $\hat{X}$ is ``the same as'' (isomorphic to) $X$,
in reflexive spaces we can recover $X$ from $X^{**}$, the dual of $X^{*}$---hurrah!

``Is that it? Just one dual space and then back where we started? Not very exciting...''
(Some people you just cannot please...)
You have a point. Vector spaces are basically just algebra.
To make things interesting (read: ``do analysis'') we need some topology.

Recall (from above) that a sequence $f_n\in X^*$ converges \emph{weakly} to $f\in X^*$ if
\begin{equation}\label{eqn:wkXs}
    \pair{\Phi, f_n} \to \pair{\Phi, f} \qfa \Phi \in X^{**}.
\end{equation}
If $X$ is reflexive, then this becomes \[
    \pair{\hat{x}, f_n} \to \pair{\hat{x}, f} \qfa x \in X,
\] or, in other words,
\begin{equation}\label{eqn:wksXs}
    \pair{f_n, x} \to \pair{f, x} \qfa x \in X.
\end{equation}

In general, $\hat{X}$ will be smaller than $X^{**}$, so (\ref{eqn:wksXs})
will be weaker than (\ref{eqn:wkXs})

\begin{definition}
    The convergence in (\ref{eqn:wksXs}) is called \emph{weak* convergence},
    and its underlying topology, the \emph{weak* topology}.
    (FYI: it's \emph{star}, \emph{weak star}---not ``weak asterisk''!)
\end{definition}

The weak* topology is important because of the following classical theorem.
\begin{theorem}[Banach--Alaoglou]
    The unit ball in $X^*$ is weak*-compact.
\end{theorem}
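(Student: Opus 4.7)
The plan is to reduce to Tychonoff's theorem by embedding $\ball{X^*}$ into a compact product. For each $x\in X$, every $f\in\ball{X^*}$ satisfies $\abs{f(x)}\leq\norm{x}$, so $f(x)$ always lies in the compact scalar disk $D_x:=\{\lambda\in\mathbb{K}:\abs{\lambda}\leq\norm{x}\}$. I would then form the product $K:=\prod_{x\in X}D_x$, compact by Tychonoff's theorem, and introduce the map $\Phi\colon\ball{X^*}\to K$ given by $\Phi(f):=(f(x))_{x\in X}$. The task splits into showing (i) $\Phi$ is a homeomorphism onto its image, and (ii) $\Phi(\ball{X^*})$ is closed in $K$.

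For (i), I would observe that both the weak* topology on $X^*$ and the product topology on $K$ are precisely the topologies of pointwise convergence on $X$: a net $f_\alpha\to f$ in weak* if, and only if, $f_\alpha(x)\to f(x)$ for every $x\in X$, which is exactly $\Phi(f_\alpha)\to\Phi(f)$ coordinatewise. So $\Phi$ and its inverse on the image are continuous, and injectivity is clear since distinct functionals disagree on some vector.

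For (ii)---the main obstacle---I would identify $\Phi(\ball{X^*})$ as the set of those $\xi=(\xi_x)_{x\in X}\in K$ for which the assignment $x\mapsto\xi_x$ is linear; boundedness is automatic from $\xi_x\in D_x$. The crucial observation is that, for each fixed pair $x,y\in X$ and scalar $\lambda\in\mathbb{K}$, the sets \[ \{\xi\in K:\xi_{x+y}=\xi_x+\xi_y\} \qand \{\xi\in K:\xi_{\lambda x}=\lambda\xi_x\} \] are closed in $K$, each being the preimage of $\{0\}$ under a continuous combination of coordinate projections. Hence $\Phi(\ball{X^*})$, being the intersection of all such closed sets as $x,y,\lambda$ vary, is itself closed in $K$.

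The subtlety in (ii) is that \emph{infinitely many} linearity constraints must survive passage to limits in $K$; fortunately, each constraint is individually a closed condition, and arbitrary intersections of closed sets are closed, so the obstacle dissolves on closer inspection. Combining (i) and (ii), $\Phi(\ball{X^*})$ is a closed subset of the compact space $K$, hence compact, and $\Phi$ then transports this compactness back to $\ball{X^*}$ equipped with the weak* topology.
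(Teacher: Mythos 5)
Your proof is correct, and it is the classical Tychonoff argument. The paper itself does not prove Banach--Alaoglou: it states the theorem and defers to Chapter V of \cite{Conway2007}, where essentially the argument you give appears. All the key points are in place in your write-up: the product $K=\prod_{x\in X}D_x$ is compact by Tychonoff, the weak* and product topologies both coincide with the topology of pointwise convergence on $X$ (correctly phrased with nets, since neither topology need be metrizable), and the image of $\ball{X^*}$ is cut out of $K$ by the family of linearity constraints, each of which is a closed condition because coordinate projections are continuous; the norm bound $\abs{\xi_x}\leq\norm{x}$ is built into membership in $K$, so the limit functional automatically lands back in $\ball{X^*}$. Nothing further is needed.
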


The next result ties all of this together.
\begin{theorem}
    A Banach space $X$ is reflexive if, and only if, the weak and weak* topologies on $X^*$ coincide.
    Hence, if $X$ is reflexive, then $\ball{X^*}$ is weakly compact.
    Furthermore, if $X$ has a countable dense subset, then the weak topology on $\ball{X}$ is metrizable,
    and so bounded sets in $X^*$ are sequentially compact.
\end{theorem}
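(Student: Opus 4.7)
The plan is to prove the three assertions in order, with the identification of the continuous duals under the weak and weak* topologies as the main technical engine.

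For the equivalence, I first observe the easy inclusion: since $\hat{X}\subseteq X^{**}$, the weak* topology on $X^*$ is generated by fewer functionals than the weak topology, and when $X$ is reflexive the two families coincide, so the two topologies are the same. For the converse, I would use the standard fact that the continuous dual of $X^*$ equipped with the weak topology is $X^{**}$ by construction, while the continuous dual of $X^*$ equipped with the weak* topology is exactly $\hat{X}$. If the topologies coincide, then so do their continuous duals, forcing $\hat{X}=X^{**}$ and hence reflexivity. The main obstacle is the second half of this last fact: given a weak*-continuous linear $\Phi\colon X^*\to\mathbb{K}$, I would argue that $\Phi^{-1}(-1,1)$ contains a basic weak* neighborhood of $0$, say $\{f:\abs{\pair{f,x_i}}<\epsilon,\ i=1,\ldots,n\}$, which forces $\abs{\Phi(f)}\leq C\max_i\abs{\pair{f,x_i}}$. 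Thus $\Phi$ vanishes on $\bigcap_i\ker\hat{x}_i$, and an elementary kernels lemma then yields $\Phi\in\operatorname{span}\{\hat{x}_1,\ldots,\hat{x}_n\}\subseteq\hat{X}$.

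The weak compactness of $\ball{X^*}$ for reflexive $X$ is then immediate: Banach--Alaoglou delivers weak*-compactness, which by the equality of topologies just established coincides with weak compactness.

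For the metrizability claim, I would fix a countable dense sequence $\{x_n\}\subseteq\ball{X}$ and, on $\ball{X^*}$, define
\[
    d(f,g):=\sum_{n=1}^{\infty} 2^{-n}\min\bigl(1,\abs{\pair{f-g,x_n}}\bigr).
\]
A routine $\epsilon/3$ argument, combining the uniform bound $\norm{f_k-f}_{X^*}\leq 2$ with density of $\{x_n\}$, shows that $d$ metrizes the weak* topology on $\ball{X^*}$; since that set is weak*-compact by Banach--Alaoglou, it is sequentially compact. When $X$ is additionally reflexive, $X^*$ inherits both separability (via $X^{**}\cong X$) and reflexivity, so applying the same construction with the roles of $X$ and $X^*$ swapped yields metrizability of the weak topology on $\ball{X}$ and sequential compactness of bounded sets in $X^*$.
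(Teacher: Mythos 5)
Your proposal is correct, but there is nothing in the paper to compare it against: the paper states this theorem without proof and defers entirely to Chapter V of \cite{Conway2007}. Your argument is the standard one found there --- the equivalence via identifying the continuous duals of $(X^*,\text{weak})$ and $(X^*,\text{weak*})$ with $X^{**}$ and $\hat{X}$ respectively (the nontrivial inclusion handled by the absorption-of-a-basic-neighborhood estimate plus the finite-kernels lemma), Banach--Alaoglu for compactness, and the summable-series metric for metrizability. One place where you add genuine value: as printed, the theorem's last clause slides between $\ball{X}$ and $\ball{X^*}$, and separability of $X$ alone metrizes only the weak* topology on $\ball{X^*}$. You correctly sort this out, observing that metrizability of the \emph{weak} topology on $\ball{X}$ requires separability of $X^*$, which you extract from reflexivity (separability of $X^{**}\cong X$ passes down to $X^*$). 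The only cosmetic repair needed is to replace $\Phi^{-1}(-1,1)$ by the preimage of the open unit disk when $\mathbb{K}=\C$; the rest of the kernel argument goes through verbatim.
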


\begin{example}(Weak compactness in $L^p(\R^n)$).
    As noted earlier, $L^p(\R^n)$ spaces are reflexive and themselves dual spaces.
    Hence, the ball in $L^p(\R^n)$, i.e., the ball in $(L^q(\R^n))^*$ if $1/p+1/q=1$,
    is weakly compact.
    Hence, bounded sets in $L^p(\R^n)$ are sequentially compact.
    As mentioned above, this is a powerful tool in the analysis of PDE and the Calculus of Variations.
\end{example}

A fuller discussion of reflexivity and weak topologies, as well as proofs of the above theorems
may be found in Chapter V of \cite{Conway2007}.

Returning to convexity, it is a remarkable fact that uniformly convex spaces are automatically reflexive.
(Or perhaps you are beginning to wonder what, if anything, is \emph{not} true in uniformly convex spaces?)
This result is known as the \emph{Milmann--Pettis Theorem},
although the slick and short proof we offer here is due to Ringrose \cite{Ringrose59}.

\begin{theorem}[Milmann--Pettis]
    Any uniformly convex Banach space is reflexive.
\end{theorem}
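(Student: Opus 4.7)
The plan is to take an arbitrary $F \in X^{**}$ with $\norm{F} = 1$ and exhibit an $x \in \ball{X}$ such that $\hat x = F$. The organizing idea is to combine \emph{Goldstine's theorem}---that the canonical image of $\ball{X}$ in $X^{**}$ is weak*-dense in $\ball{X^{**}}$, a classical Hahn--Banach consequence not developed in the excerpt---with uniform convexity read in its contrapositive form: two points of the unit ball whose midpoint is near the sphere must be close in norm.

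First I would pick $f_n \in X^*$ of unit norm with $\pair{F, f_n} > 1 - 1/n$, rotating each by a unit scalar so that these values are real and positive. By Goldstine, applied to the weak* neighborhood of $F$ cut out by $f_1, \dots, f_n$, I would choose $x_n \in \ball{X}$ with $\abs{\pair{f_k, x_n} - \pair{F, f_k}} < 1/n$ for every $k \leq n$. For $m \leq n$, both $\pair{f_m, x_n}$ and $\pair{f_m, x_m}$ exceed $1 - 2/m$, hence so does $\pair{f_m, \tfrac{x_n+x_m}{2}}$; since $\norm{f_m} = 1$ this forces $\norm{\tfrac{x_n + x_m}{2}} > 1 - 2/m$. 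Uniform convexity then gives $\norm{x_n - x_m} \to 0$ as $m \to \infty$ uniformly in $n \geq m$, so $(x_n)$ is Cauchy and, by completeness, converges in norm to some $x \in \ball{X}$.

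It remains to verify $\hat x = F$, i.e.\ $\pair{g, x} = \pair{F, g}$ for every $g \in X^*$. The subtlety is that $x$ was built using only the sequence $(f_k)$, so for a given $g$ one must rerun the construction. Fixing $g$ with $\norm{g} \leq 1$, I would apply Goldstine once more to produce $y_n \in \ball{X}$ satisfying the same approximations on $\{f_1, \dots, f_n\}$ \emph{together with} $\abs{\pair{g, y_n} - \pair{F, g}} < 1/n$. The same UC argument makes $(y_n)$ Cauchy, with limit $y \in \ball{X}$ satisfying $\pair{g, y} = \pair{F, g}$. Comparing $x_n$ and $y_n$ through $f_m$ one more time yields $\norm{\tfrac{x_n + y_n}{2}} > 1 - 2/m$, whence uniform convexity forces $\norm{x_n - y_n} \to 0$ and hence $x = y$. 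Therefore $\pair{g, x} = \pair{g, y} = \pair{F, g}$, which gives $\hat x = F$.

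The main obstacle is not any single calculation---each invocation of uniform convexity is routine once the modulus is in hand---but rather the reliance on Goldstine's theorem, which the excerpt never states. Without some device for locating elements of $\ball{X}$ whose canonical images approximate $F$ weak*, one has no handle on $F$ at all; granted Goldstine, uniform convexity does the remaining work, playing three times the role of a quantitative tool that promotes ``midpoint nearly on the sphere'' to ``endpoints nearly equal''.
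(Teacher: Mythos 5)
Your proof is correct, and it takes a genuinely different route from the one in the paper. Both arguments rest on the same two pillars---Goldstine's theorem (which the paper also invokes, asking the reader to ``recall or take on faith'' that $\widehat{\ball{X}}$ is weak*-dense in $\ball{X^{**}}$) and uniform convexity in the quantitative form ``midpoint near the sphere forces endpoints close''---but they deploy them differently. The paper follows Ringrose: it argues by contradiction, fixing $\Phi$ at positive distance $2\epsilon$ from $U = \widehat{\ball{X}}$, cutting out a weak*-open slice $V$ with a single functional $f$, showing $V\cap U$ has norm-diameter less than $\epsilon$, and then trapping $\Phi$ in the weak*-closure of $V \cap U$ inside a small ball around a point of $U$. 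That argument is short but hinges on identifying a weak*-closed set containing $V\cap U$ (the paper's justification ``closed, hence weak*-closed'' is the delicate point; the honest reason is that $\Psi + \epsilon\ball{X^{**}}$ is weak*-compact by Banach--Alaoglu). Your argument is direct and sequential: you manufacture a norm-Cauchy sequence in $\ball{X}$ by applying Goldstine to finite-dimensional weak* neighborhoods determined by countably many near-norming functionals $f_1,\dots,f_n$, extract the limit $x$ by completeness, and then verify $\hat{x}=F$ against an arbitrary $g$ by rerunning the construction and using uniform convexity a third time to force the two limits to coincide. What you lose in brevity (three invocations of the modulus instead of one, plus the extra verification step for $g$) you gain in robustness: you never need to assert that any set is weak*-closed, only that norm limits of sequences in the closed unit ball stay in the ball. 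The one point to make explicit in a final write-up is the handling of real parts in the complex case when passing from $\abs{\pair{f_m, x_n} - \pair{F, f_m}} < 1/n$ to the lower bound $\norm{\tfrac{x_n+x_m}{2}} > 1 - 2/m$, but your ``rotate by a unit scalar'' remark shows you are aware of this.
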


In the proof below we use the following, equivalent characterization of uniform convexity:
for every $\epsilon\in(0,2]$ there is a $\delta\in(0,1)$ such that
whenever $x,y\in\ball{X}$ with $\norm{x+y}>2-\delta$, we have $\norm{x-y}<\epsilon$.
(That this is equivalent to Definition 4 is a straightforward exercise in notation juggling.)

\begin{proof}
    Suppose $X$ is uniformly convex but not reflexive, so $X^{**}\setminus \hat{X}$ is non-empty.

    Recall (or take on faith) that the image of $\ball{X}$ under $\hat{\cdot}$
    \begin{equation}\label{eqn:u}
        U := \{ \hat{x}\in X^{**} : x \in \ball{X}\}
    \end{equation} is dense in $\ball{X^{**}}$
    with respect to the weak* topology \emph{on} $X^{**}$
    (see Proposition V.4.1 and the surrounding discussion in \cite{Conway2007}).
    It follows that $B := \ball{X^{**}}$ is precisely the weak*-closure of $U$.

    We first note that $U$ is a norm-closed subspace of $B$.
    Indeed, suppose $(\hat{x}_n)_n$ is a Cauchy sequence in $U$.
    Then, $(x_n)_n$ is Cauchy in $\ball{X}$ since \[
        \norm{\hat{x}_n - \hat{x}_m}_{X^{**}} = \norm{x_n - x_m}.
    \] Hence, $x_n\to x$ for some $x\in\ball{X}$, and so, clearly, $\hat{x}_n\to \hat{x}\in U$.

    Now, if $X$ is \emph{not} reflexive, then $U$ is a \emph{proper} closed subset of $B$,
    whence we can find a $\Phi\in B\setminus U$ with unit norm such that
    \begin{equation}\label{eqn:dist}
        \epsilon := \frac{1}{2}d(\Phi, U) > 0.
    \end{equation}

    Since $\norm{\Phi}_{X^{**}}=1$, there is a functional $f\in X^*$ with $\norm{f}_{X^*}=1$
    such that
    \begin{equation}\label{eqn:phi}
        \abs{\Phi(f)-1}< \delta/2,
    \end{equation}
    where $\delta>0$ is chosen in accordance with the uniform convexity of $X$ for our choice of $\epsilon$.
    (This is just a consequence of the definition of the norm on $X^{**}$.)

    Now consider the open set \[
        V := \{\Psi \in X^{**} : \abs{\Psi(f) - 1} < \delta/2 \}.
    \] If $\Psi,\Psi_1\in V\cap U$, then $\abs{\Psi(f)+\Psi_1(f)}>2-\delta$.
    Since $\Psi,\Psi_1\in U$, they are of the form $\hat{x},\hat{x}_1$ for some $x,x_1\in\ball{X}$.
    Thus, \[
        \norm{x+x_1} = \norm{\Psi+\Psi_1}_{X^{**}}\geq 2 -\delta,
    \] and so, by uniform convexity of $X$, \[
        \norm{\Psi-\Psi_1}_{X^{**}} = \norm{x-x_1} < \epsilon.
    \] In other words, for a fixed $\Psi\in V\cap U$, we have \[
        V \cap U \subset \Psi + \epsilon U,
    \] and the latter set is closed, and hence weak*-closed.

    We now show that $\Phi\in \Psi +\epsilon U$.
    To do so, recall that $U$ is weak*-dense in $B$, so we can find a sequence $\hat{x}_n \in U$
    such that $\hat{x}_n \to \Phi$ in the weak* sense.
    But $\Phi \in V$ by (\ref{eqn:phi}), and $V$ is a weak*-open set.
    Hence, eventually, $\hat{x}_n \in V$. This shows that $\Phi$ lies in the weak*-closure of $V\cap U$,
    and therefore in the weak*-closed set $\Psi + \epsilon U$.
    But this means that $\norm{\Phi-\Psi}\leq\epsilon$ for some $\Psi\in U$, contradicting \eqref{eqn:dist}.
\end{proof}

Hence, we have another proof of the reflexivity of $L^p$ spaces ($1<p<\infty$).

As a nice cap to our discussion, we deduce the following result from the reflexivity of $L^p$,
which shows how another basic geometric fact about Hilbert spaces still holds in reflexive spaces.
\begin{corollary}
    Let $Y$ be a closed subspace of $L^p$ ($1<p<\infty$).
    Then, for any $f\notin Y$ there is a unique closest point $g\in Y$, to $f$ i.e., \[
        \norm{f-g}_{L^p} = \inf_{h\in Y}\norm{f-h}_{L^p}.
    \]
\end{corollary}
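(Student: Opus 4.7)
The plan is to invoke the uniform convexity of $L^p$ ($1<p<\infty$) twice: once for existence, by forcing a minimizing sequence to be Cauchy, and once for uniqueness, by appealing to the strict convexity that uniform convexity implies (Corollary 5).

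For existence, let $d := \inf_{h\in Y}\norm{f-h}_{L^p}$, which is positive since $Y$ is closed and $f\notin Y$. Choose a minimizing sequence $(h_n)\subset Y$, i.e., $\norm{f-h_n}\to d$. The key observation is that $\tfrac{h_n+h_m}{2}\in Y$, so
\[
    d \leq \norm{f - \tfrac{h_n+h_m}{2}} = \norm{\tfrac{(f-h_n)+(f-h_m)}{2}} \leq \tfrac{1}{2}\bigl(\norm{f-h_n}+\norm{f-h_m}\bigr).
\]
Setting $D_{n,m} := \max\{\norm{f-h_n},\norm{f-h_m}, d\}$, the normalized vectors $(f-h_n)/D_{n,m}$ and $(f-h_m)/D_{n,m}$ lie in $\ball{L^p}$, and the display forces the norm of their midpoint to tend to $1$ as $n,m\to\infty$. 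The lemma preceding this corollary (applied along any pair of subsequences, to rule out the existence of $\epsilon>0$ with $\norm{h_{n_k}-h_{m_k}}\geq\epsilon$) then yields $\norm{h_n-h_m}/D_{n,m}\to 0$, and since $D_{n,m}\to d$, we get $\norm{h_n-h_m}\to 0$. So $(h_n)$ is Cauchy; its limit $g$ lies in $Y$ by closedness and satisfies $\norm{f-g}=d$ by continuity of the norm.

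For uniqueness, suppose $g_1, g_2\in Y$ both attain $d$. Then $\tfrac{g_1+g_2}{2}\in Y$, and the same infimum-plus-triangle argument gives $\norm{f - \tfrac{g_1+g_2}{2}}=d$. Hence $(f-g_1)/d$ and $(f-g_2)/d$ are unit vectors whose midpoint also has norm $1$, so strict convexity of $L^p$ (Corollary 5) forces them---and therefore $g_1$ and $g_2$---to coincide.

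The only real technical nuisance will be the normalization dance in the existence step, which is unavoidable if we want to apply a lemma whose hypothesis requires vectors in $\ball{L^p}$. An alternative route would use reflexivity of $L^p$ (Milman--Pettis) to extract a weakly convergent subsequence of $(h_n)$, use that the closed subspace $Y$ is also weakly closed, and finish via weak lower semicontinuity of the norm; but this brings in more machinery than the direct uniform-convexity argument, which fits more naturally with the themes of the article.
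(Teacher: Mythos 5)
Your proof is correct, and your existence argument takes a genuinely different route from the paper's. The paper dispatches existence in one line by invoking the reflexivity of $L^p$ (just established via Milmann--Pettis) together with a cited result from Conway (Proposition V.4.7), which rests on weak compactness of the ball and weak lower semicontinuity of the norm---essentially the ``alternative route'' you mention and set aside. You instead give a self-contained argument: a minimizing sequence in $Y$ is forced to be Cauchy by uniform convexity, via the normalization into $\ball{L^p}$ and the Lemma on midpoints with norm tending to $1$. Your approach proves more (unique nearest points to closed \emph{convex} sets in any uniformly convex Banach space, with no mention of reflexivity) and meshes well with the machinery the article has already built; the paper's approach is shorter on the page and showcases reflexivity, which is the point of the section where the corollary appears. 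Your handling of the double index $(n,m)$ by passing to subsequences to contradict the Lemma is slightly informal but sound. The uniqueness step is essentially identical to the paper's: both reduce to the midpoint $\tfrac{g_1+g_2}{2}\in Y$ and strict convexity, the paper phrasing it contrapositively as a strict triangle inequality.
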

\begin{proof}
    Since $L^p$ is reflexive, a point $g$ as above exists (see Proposition V.4.7 in \cite{Conway2007}).
    Suppose $h\in Y$ were another closest point, i.e., \[
        \norm{f-g} = \norm{f-h} = d(f,Y).
    \] Since $Y$ is a subspace, $\tfrac{1}{2}(g+h)\in Y$ and so \[
        \norm{f-\frac{g+h}{2}} = \frac{\norm{(f-g)+(f-h)}}{2} < d(f,Y)
    \] by the strict convexity of the norm on $L^p$,
    contradicting the fact that $g\in Y$ is the closest point to $f$.
\end{proof}
\[ * \]
To answer Professor Pay\'a's humorous remark, one can go quite far by thinking about the triangle inequality,
and the long trip offers very pleasant views.

\begin{acknowledgment}{Acknowledgment.}
The author wishes to thank Prof Rafael Pay\'a Albert of the University of Granada for their stimulating conversations over the years,
which have remained a source of inspiration for the author to pursue his studies and research in Mathematics.

He would also like to thank the anonymous reviewers for their many great suggestions
for improving the article into its current form.

This work was supported by the EPSRC studentship EP/V520226/1.
For the purpose of open access, the author has applied a Creative Commons Attribution (CC BY) licence
to any Author Accepted Manuscript version arising from this submission.
\end{acknowledgment}

\begin{biog}
\item[Ryan L.~Acosta Babb] obtained a BSc in Mathematics and Philosophy
with a specialism in Logic and Foundations, and a Master's of Advanced Study in Mathematical Sciences
from the University of Warwick.
Not knowing when to quit, he stubbornly went on to pursue a PhD at the same institution.
His main research interests are harmonic and functional analysis,
especially the $L^p$ convergence of series of eigenfunctions for differential operators.
He is also fascinated by the history and philosophy of mathematics,
having delivered expository talks on constructive mathematics,
the incompleteness theorems and the history of Euclid's parallel postulate.
He remains, to this day, an unrepentant bibliophile, who devotes
much of his time outside research devoted to the study of classics,
and may or may not have spent more time attempting to master
the Greek verbal system than oscillatory integrals.
\begin{affil}
Mathematics Institute, University of Warwick, Coventry, CV47AL, United Kingdom\\
\end{affil}
r.acosta-babb@warwick.ac.uk | \url{https://sites.google.com/view/ryan-acosta/home}
\end{biog}

\bibliography{bibliography  }

\vfill\eject

\end{document}